\definecolor{black}{cmyk}{1.,1.,1.,1.0}
\definecolor{blue}{cmyk}{1.,1.,0.,0.63}
\definecolor{green}{cmyk}{1.,0.,1.,0.63}
\newcommand{\vf}{\vfill\end{document}}
\theoremstyle{definition}
\newtheorem{thm}{Theorem}[section]
\newtheorem{lem}{Lemma}[section]
\numberwithin{equation}{section}
\newcommand{\thistheoremname}{}
\newtheorem*{genericthm*}{\thistheoremname}
\newenvironment{namedthm*}[1]{\renewcommand{\thistheoremname}{#1}%
\begin{genericthm*}}
{\end{genericthm*}}
\newtheoremstyle{named}{}{}{\itshape}{}{\bfseries}{.}{.5em}{\thmnote{#3's }#1}
\theoremstyle{named}
\title{CONSTRUCTION OF HYPERBOLIC HYPERSURFACES
\\
OF LOW DEGREE IN $\mathbb{P}^n(\mathbb{C})$}
\providecommand{\keywords}[1]{\textbf{\textit{Keywords:}} #1}
\author{Dinh Tuan HUYNH}
\newcommand{\Addresses}{{
\bigskip
\footnotesize
\textsc{Dinh Tuan Huynh, Laboratoire de Math\'{e}matiques d'Orsay, Univ. Paris-Sud, CNRS, Universit\'{e} Paris-Saclay, 91405 Orsay, France.}\\
\textsc{Department of Mathematics, College of Education, Hue University, 34 Le Loi St., Hue City, Vietnam.}
\par\nopagebreak
\textit{E-mail address}: \texttt{dinh-tuan.huynh@math.u-psud.fr}}}
\date{\vspace{-5ex}}
\begin{document}
\maketitle
\begin{abstract}
We construct families of hyperbolic hypersurfaces $X_d\subset\mathbb{P}^{n+1}(\mathbb{C})$ of degree $d\geq {\textstyle{(\frac{n+3}{2})^2}}$.
\end{abstract}
\keywords{Kobayashi conjecture}, {hyperbolicity}, {Brody Lemma}, {Nevanlinna Theory}

\section{Introduction and the main result}

It was conjectured by Kobayashi \cite{Kobayashi1970} in 1970 that a generic hypersurface $X_d\subset\mathbb{P}^{n+1}(\mathbb{C})$ of sufficiently high degree $d\geq d(n) \gg 1$ is hyperbolic. According to Zaidenberg \cite{Zaidenberg1987}, the optimal degree bound should be $d(n)=2n+1$.

This conjecture, with nonoptimal degree bound in the assumption, was proved, in the case of surface in $\mathbb{P}^3(\mathbb{C})$, by Demailly and El Goul \cite{demailly_goul2000}, and later, by P\v{a}un \cite{mihaipaun2008} with a slight improvement of the degree bound, and in the case of three-fold in $\mathbb{P}^4(\mathbb{C})$ \cite{Rousseau2007}, \cite{Diverio-Trapani2010}. For arbitrary $n$, it was proved in \cite{DMR2010} that any entire curve in generic hypersurface $X_d\subset\mathbb{P}^{n+1}(\mathbb{C})$ of degree $d\geq 2^{n^5}$ must be algebraically degenerate. An improvement of the effective degree bound in this result was given in \cite{Lionel2016}. Recently, for any dimension $n$, a positive answer for generic hypersurfaces of degree $d\geq d(n)\gg 1$ very high was proposed by Siu \cite{siu2015}, and a strategy which is expected to give a confirmation of this conjecture for {\em very} generic hypersurfaces of degree $d\geq 2n+2$ was announced by Demailly \cite{demailly2015}.

Another direction on this subject is to construct examples of hyperbolic hypersurfaces of low degree. In low dimensional case, several examples of hyperbolic hypersurfaces were given. The first example of a hyperbolic surface in $\mathbb{P}^3(\mathbb{C})$ was constructed by Brody and Green \cite{Brody-Green1977}. In $\mathbb{P}^3(\mathbb{C})$, Duval \cite{duval2004} gave an example of a hyperbolic surface of degree $6$, which is the lowest degree found up to date. Later, Ciliberto and Zaidenberg \cite{Ciliberto_Zaidenberg2013} gave a new construction of hyperbolic surface of degree $6$ and their method works for all degree $d\geq 6$ (hence, this is the first time when a hyperbolic surface of degree $7$ was created). In \cite{Huynh2015}, we constructed families of hyperbolic hypersurfaces of degree $d=d(n)=2n+2$ for $2\leq n\leq 5$ (the method works for all $d\geq 2n+2$). The first examples in any dimension $n\geq 4$ were discovered by Masuda and Noguchi \cite{masuda_noguchi1996}, with high degree. Improving this result, examples of hyperbolic hypersurfaces with lower degree asymptotic were given by Siu and Yeung \cite{siu_yeung1997} with $d(n)=16\,n^2$, and by Shiffman and Zaidenberg \cite{shiffman_zaidenberg2002_pn} with $d(n)=4\,n^2$.

In this note, using the technique of \cite{Huynh2015}, we improve the result of Shiffman and Zaidenberg \cite{shiffman_zaidenberg2002_pn} by proving that a small deformation of a union of $q\geq {\textstyle{(\frac{n+3}{2})^2}}$ hyperplanes in general position in $\mathbb{P}^{n+1}(\mathbb{C})$ is hyperbolic.

A family of hyperplanes $\{H_i\}_{1\leq i\leq q}$ with $q\geq n+1$ in $\mathbb{P}^n(\mathbb{C})$ is said to be in \textsl{general position} if any $n+1$ hyperplanes in this family have empty intersection, namely if
\[
\cap_{i\in I}H_i=\emptyset,\quad\forall\, I\subset \{1,\ldots, q\},\, |I|=n+1.
\]

Let $\{H_i\}_{1\leq i\leq q}$ be a family of hyperplanes in general position in $\mathbb{P}^n(\mathbb{C})$. A hypersurface $S$ in $\mathbb{P}^n(\mathbb{C})$ is said to be in \textsl{general position with respect to $\{H_i\}_{1\leq i\leq q}$} if it avoids all intersection points of $n$ hyperplanes, namely if
\[
S\cap\big(\cap_{i\in I}H_i\big)=\emptyset,\quad\forall \, I\subset \{1,\ldots, q\}, \, |I|=n.
\]

\begin{namedthm*}{Main Theorem}\label{maintheorem}
Let $\{H_i\}_{1\leq i\leq q}$  be a family of $q\geq {\textstyle{(\frac{n+3}{2})^2}}$ hyperplanes in general position in $\mathbb{P}^{n+1}(\mathbb{C})$, where $H_i=\{h_i=0\}$. Then there exists a hypersurface $S=\{s=0\}$ of degree $q$ in general position with respect to $\{H_i\}_{1\leq i\leq q}$ such that the hypersurface 
\[
\Sigma_{\epsilon}
=
\big\{\epsilon s+\Pi_{i=1}^{q}h_i=0\big\}
\]
is hyperbolic for sufficiently small complex $\epsilon\not=0$.
\end{namedthm*}

\section*{Acknowledgments}
I would like to gratefully thank Julien Duval for turning my attention to this problem and for his inspiring discussions on the subject. I am specially thankful to Jo\"{e}l Merker for his encouragements and his comments that greatly improved the manuscript.

\section{Preparations}
\subsection{Brody Lemma and its applications}

Let $X$ be a compact complex manifold equipped with a hermitian metric $\| \cdot\|$. An \textsl{entire curve} in $X$ is a nonconstant holomorphic map $f:\mathbb{C}\rightarrow X$. Such an $f:\mathbb{C}\rightarrow X$ is called a \textsl{Brody curve} if its derivative $\| f'\|$ is bounded. The following result \cite{brody1978} is a useful tool for studying complex hyperbolicity.

\begin{namedthm*}{Brody Lemma}
\label{brodylemma}
Let $f_k:\mathbb{D}\rightarrow X$ be a sequence of holomorphic maps from the unit disk to a compact complex manifold $X$. If $\|f_k'(0)\|\rightarrow\infty$ as $k\rightarrow \infty$, then there exist a point $a\in\mathbb{D}$, a sequence $(a_k)$ converging to $a$ and a decreasing sequence $(r_k)$ of positive real numbers converging to 0 such that the sequence of maps
\[
z\rightarrow f_k(a_k+ r_k\, z)
\]
converges toward a Brody curve, after extracting a subsequence.
\end{namedthm*}

Consequently, we have a well-known characterization of Kobayashi hyperbolicity.

\begin{namedthm*}{Brody Criterion}\label{brody_criterion}
A compact complex manifold $X$ is Kobayashi hyperbolic if and only if it contains no entire curve.
\end{namedthm*}

The following form of the Brody Lemma shall be repeatedly used in the proof of the Main Theorem.

\begin{namedthm*}{Sequences of entire curves}\label{lemma brody_use}

Let $X$ be a compact complex manifold and let $(f_k)$ be a sequence of entire curves in $X$. Then there exist a sequence of reparameterizations $r_k:\mathbb{C}\rightarrow\mathbb{C}$ and a subsequence of $(f_k\circ r_k)$ which converges toward an entire curve.
\end{namedthm*}

\subsection{Stability of intersections}

We recall here the following known complex analysis fact.

\begin{namedthm*}{Stability of intersections}\label{stability}
 Let $X$ be a complex manifold and let $H \subset X$ be an analytic hypersurface. Suppose that a sequence $(f_k)$ of entire curves in $X$ converges toward an entire curve $f$. If $f(\mathbb{C})$ is not contained in $H$, then
\[
f(\mathbb{C})
\cap H
\subset
\lim f_k(\mathbb{C})\cap H.
\]
\end{namedthm*}

\subsection{Hyperbolicity of the complement of $2n+1$ hyperplanes in general position in $\mathbb{P}^n(\mathbb{C})$}

We also need the classical generalization of Picard's theorem (case $n=1$) \cite{Fujimoto1972}.

\begin{thm}
\label{fujimoto-green}
The complement of a collection of $2n+1$ hyperplanes in general position in $\mathbb{P}^n(\mathbb{C})$ is hyperbolic.
\end{thm}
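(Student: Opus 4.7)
The plan is to prove that any entire curve $f:\CC\to\mathbb{P}^n(\CC)\setminus\bigcup_{i=1}^{2n+1}H_i$ must be constant, whence hyperbolicity follows at once from the Brody Criterion recalled earlier. The key tool is Borel's classical theorem on exponential sum identities, combined with an induction on $n$ whose base case $n=1$ is the little Picard theorem (an entire map to $\mathbb{P}^1(\CC)$ omitting three points is constant).

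First I would pass to a homogeneous lift $\tilde f = (f_0,\ldots, f_n):\CC\to\CC^{n+1}\setminus\{0\}$ with components having no common zero. After a linear change of homogeneous coordinates on $\mathbb{P}^n(\CC)$, one may assume that the first $n+1$ hyperplanes are the coordinate hyperplanes $\{z_j=0\}$, so each $f_j$ is a nowhere-vanishing entire function and hence admits a representation $f_j = e^{g_j}$ for some entire $g_j$. Writing the remaining $n$ hyperplanes as $H_{n+1+k} = \{\sum_j a_{kj}z_j = 0\}$ for $1\le k\le n$, the pulled-back functions $L_k\circ\tilde f = \sum_j a_{kj} e^{g_j}$ are likewise nowhere vanishing, so $\sum_j a_{kj}e^{g_j} = e^{h_k}$ for entire $h_k$. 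This produces $n$ exponential identities of the shape $\sum_j a_{kj}e^{g_j} - e^{h_k} \equiv 0$.

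Next I would invoke Borel's theorem: if entire functions $u_1,\ldots, u_N$ satisfy a minimal identity $\sum_i c_i e^{u_i}\equiv 0$ (no proper subsum vanishing) with nonzero constants $c_i$, then every difference $u_i - u_j$ is a constant. The general position hypothesis forces every $(n+1)\times(n+1)$ subdeterminant of the matrix of defining coefficients of $H_1,\ldots,H_{2n+1}$ to be nonzero, which rules out vacuous subsum cancellations in each of the $n$ identities above. Consequently one extracts a nontrivial linear relation with constant coefficients among the $e^{g_j}$, meaning that some ratio $f_i/f_j$ is a constant, so $\tilde f$ factors through a proper linear subspace $V\subsetneq\CC^{n+1}$.

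Restricting to $\mathbb{P}(V)\cong\mathbb{P}^{n-1}(\CC)$, one verifies that general position of the original $2n+1$ hyperplanes induces at least $2(n-1)+1$ hyperplanes in general position cutting $\mathbb{P}(V)$, and $f$ still omits all of them, completing the induction. The main obstacle is the combinatorial step: one must carefully bookkeep which differences $g_i-g_j$ and $g_i-h_k$ are forced to be constant by each minimal subsum, and verify that after descent to $\mathbb{P}(V)$ the induced hyperplane arrangement remains in general position with the correct count. This is the delicate heart of the Fujimoto--Green argument, but it is by now a standard application of Borel's theorem.
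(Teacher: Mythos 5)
The paper does not actually prove this statement: it is quoted as a classical theorem of Fujimoto and Green with a citation, so there is no internal proof to match yours against. Judged on its own, your sketch follows the historically correct strategy (Borel's identity theorem plus reduction to a proper linear subspace), and the first half is sound: general position does force all coefficients $a_{kj}$ to be nonzero, Borel's lemma applied to $\sum_j a_{kj}e^{g_j}-e^{h_k}\equiv 0$ either makes all the $g_j$ differ by constants (so $f$ is constant) or produces a vanishing minimal subsum among the $g$-terms, and in the latter case the image of $f$ lies in a proper linear subspace $V$.

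The genuine gap is the descent step, which you flag as ``the delicate heart'' but do not carry out --- and it is precisely the point where the naive induction breaks. The restriction of a general-position arrangement to a subspace $V$ dictated by the curve need \emph{not} be in general position: if $V$ contains a point $\cap_{i\in I}H_i$ with $|I|=n$, then those $n$ restricted hyperplanes are concurrent in $V\cong\mathbb{P}^{n-1}(\mathbb{C})$, violating general position there; moreover distinct $H_i$ can restrict to the same hyperplane of $V$ whenever $H_i\cap H_j\subset V$. One must then argue that among the $2n+1$ restrictions one can still extract $2(n-1)+1$ hyperplanes in general position, and this is not automatic from a dimension count (for $n=3$, for instance, $V$ can contain several triple points of the arrangement). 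Green's actual proof avoids this trap by establishing the stronger quantitative statement --- a curve omitting $n+p$ hyperplanes in general position has image in a linear subspace of dimension at most $\lfloor n/p\rfloor$ --- via a direct count of Borel equivalence classes against the $q-(n+1)$ independent linear relations among the forms, rather than by restricting the arrangement and inducting. A secondary point: the Brody Criterion quoted in the paper is for \emph{compact} manifolds, so ruling out entire curves in the noncompact complement only gives Brody hyperbolicity; passing to Kobayashi hyperbolicity of the complement requires Green's additional argument on hyperbolic embeddedness (though for the way Theorem~\ref{fujimoto-green} is used in this paper, only the nonexistence of entire curves is needed).
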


\section{Proof of the Main Theorem}

Given a hypersurface $S$ of degree $q$ in general position with respect to the family $\{H_i\}_{1\leq i\leq q}$, we would like to determine what conditions $S$ should satisfy for $\Sigma_{\epsilon}$ to be hyperbolic. Suppose that $\Sigma_{\epsilon_k}$ is not hyperbolic for a sequence $(\epsilon_k)$ converging to $0$. Then we can find entire curves $f_{\epsilon_k}\colon\mathbb{C}\rightarrow\Sigma_{\epsilon_k}$. By the Brody Lemma, after reparametrization and extraction, we may assume that the sequence $(f_{\epsilon_k})$ converges to an entire curve $f\colon\mathbb{C}\rightarrow \cup_{i=1}^{q}H_i$. By uniqueness principle, the curve $f(\mathbb{C})$ lands in $\cap_{i\in I}H_i$, for some subset $I$ of the index set $\mathbf{Q}:=\{1,\ldots,q\}$ and does not land in any $H_j$ with $j\in\mathbf{Q}\setminus I$.

\begin{lem}
One has
\[
|I|\leq n-1.
\]
\end{lem}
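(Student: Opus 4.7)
The plan is to rule out $|I| \geq n$ by dimensional considerations combined with Picard's theorem (the case $n=1$ of Theorem~\ref{fujimoto-green}). Since the $H_i$ are in general position in $\mathbb{P}^{n+1}(\mathbb{C})$, any $n+2$ of them have empty intersection, so $|I| \leq n+1$. Moreover, if $|I| = n+1$, then $\cap_{i\in I} H_i$ is a single point, forcing $f$ to be constant — contradicting that $f$ is an entire curve. The work is therefore concentrated in the case $|I|=n$.

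Assume toward contradiction that $|I|=n$, so that $L := \cap_{i\in I} H_i$ is a projective line $\mathbb{P}^1$ and $f(\mathbb{C}) \subset L$. First I would check that for every $j \in \mathbf{Q}\setminus I$, the line $L$ is not contained in $H_j$ (otherwise $L \subset \cap_{i\in I\cup\{j\}} H_i$, which by general position is a single point). Hence each $H_j$ meets $L$ in a single point $p_j$. These $q-n$ points are pairwise distinct: if $p_{j_1}=p_{j_2}$ with $j_1\neq j_2$, then that point would lie in an intersection of $n+2$ hyperplanes from the family, contradicting general position.

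Next, I would show that $f(\mathbb{C})$ avoids all the $p_j$. Since $f$ does not land in $H_j$, the Stability of intersections gives $f(\mathbb{C}) \cap H_j \subset \lim f_{\epsilon_k}(\mathbb{C}) \cap H_j$. But on $H_j$ we have $\prod_i h_i = 0$, so $\Sigma_{\epsilon_k}\cap H_j = S\cap H_j$, which forces $f_{\epsilon_k}(\mathbb{C})\cap H_j \subset S\cap H_j$ for every $k$. Passing to the limit, $f(\mathbb{C})\cap H_j \subset L\cap H_j\cap S = \{p_j\}\cap S$. Now $p_j = \cap_{i\in I\cup\{j\}} H_i$ is the intersection of $n+1$ hyperplanes of the family, so by the general position assumption on $S$ we have $p_j \notin S$. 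Therefore $f(\mathbb{C})\cap H_j = \emptyset$.

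Consequently $f:\mathbb{C}\to L\cong \mathbb{P}^1$ misses $q-n$ distinct points. Picard's theorem requires $q-n\leq 2$, whereas the hypothesis $q\geq \bigl(\tfrac{n+3}{2}\bigr)^2$ yields
\[
q - n \;\geq\; \frac{(n+3)^2 - 4n}{4} \;=\; \frac{n^2+2n+9}{4} \;>\; 2,
\]
a contradiction. The main obstacle is the bookkeeping in this last case: one has to combine Stability of intersections with the defining equation $\Sigma_{\epsilon_k} = \{\epsilon s + \prod h_i = 0\}$ to transfer information from $\Sigma_{\epsilon_k}$ to the limit curve, and then exploit the general position of $S$ at the right codimension to conclude that the omitted points of $\mathbb{P}^1$ are genuinely omitted.
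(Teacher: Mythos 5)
Your proposal is correct and follows essentially the same route as the paper: apply stability of intersections to get $f(\mathbb{C})\cap H_j\subset S\cap H_j$ (using that $\Sigma_{\epsilon_k}\cap H_j=S\cap H_j$), invoke the general position of $S$ to conclude these intersections are empty, and finish with Picard's theorem on the line $\cap_{i\in I}H_i$. The extra details you supply (the case $|I|=n+1$, distinctness of the omitted points) are harmless elaborations of what the paper leaves implicit.
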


\begin{proof}
If on the contrary $|I|=n$, then for all $j\in \mathbf{Q}\setminus I$, by stability of intersections, one has
\[
f(\mathbb{C})\cap H_j
\subset
\lim f_{\epsilon_k}(\mathbb{C})\cap H_j\subset
\lim\Sigma_{\epsilon_k}\cap H_j\subset S\cap H_j.
\]
Thus, $f(\mathbb{C})\cap H_j\subset S\cap H_j\cap\big(\cap_{i\in I}H_i\big)=\emptyset$. Hence, $f(\mathbb{C})\subset
\cap_{i\in I}H_i\setminus\big(\cup_{j\in \mathbf{Q}\setminus I} H_j\big)$, which is a contradiction, since the complement of $q-|I|>3$ points in a line is hyperbolic by Picard's theorem.
\end{proof}

By the above argument, $f(\mathbb{C})\cap H_j$ is contained in $S$ for all $j\in \mathbf{Q}\setminus I$. Therefore, the curve $f(\mathbb{C})$ lands in
\begin{equation}\label{reduced_problem}
\cap_{i\in I}H_i\setminus\big(\cup_{j\in \mathbf{Q}\setminus I}H_j\setminus S\big).
\end{equation}
So, the problem reduces to finding a hypersurface $S$ of degree $q$ such that all complements of the form \eqref{reduced_problem} are hyperbolic, where $I$ is an arbitrary subset of $\mathbf{Q}$ having cardinality at most $n-1$.

Such a hypersurface $S$ will be constructed by using the deformation method of Zaidenberg and Shiffman \cite{Shiffman_Zaidenberg2005}.

\textit{Starting point of the deformation process.} Let $\{H_i\}_{1\leq i\leq q}$ be a family of hyperplanes in general position in $\mathbb{P}^n(\mathbb{C})$. For some integer $0\leq k\leq n-1$ and some subset $I_k=\{i_1,\dots,i_{n-k}\}$ of the index set $\{1,\ldots,q\}$ having cardinality $n-k$, the linear subspace $P_{k,I_k}= \cap_{i\in I_k}H_i\simeq\mathbb{P}^k(\mathbb{C})$ will be called a \textsl{subspace of dimension $k$}. We will denote by $P_{k,I_k}^*$ the complement $P_{k,I_k}\setminus\big(\cup_{i\not\in I_k} H_i\big)$, which we will call a \textsl{star-subspace of dimension $k$}. The process of constructing $S$ by deformation will start with the following result, which is an application of Theorem~\ref{fujimoto-green}.

\begin{namedthm*}{Starting Lemma}
\label{starting}
Let $\{H_i\}_{1\leq i\leq q}$ be a family of $q\geq {\textstyle{(\frac{n+3}{2})^2}}$ hyperplanes in general position in $\mathbb{P}^{n+1}(\mathbb{C})$. Let $I$ and $J$ be two disjoint subsets of the index set $\{1,\ldots,q\}$ such that $1\leq |I|\leq n-1$, and $|J|= q+m+1-2|I|$ with some $0\leq m\leq |I|-1$. Then all complements of the form
\begin{equation}\label{starting_form}
\cap_{i\in I}H_i\setminus\big(\cup_{j\in J}H_j\setminus A_{m,n+1-|I|}\big)
\end{equation}
are hyperbolic, where $A_{m,n+1-|I|}$ is a set of at most $m$ star-subspaces coming from the family of hyperplanes $\{\cap_{i\in I}H_i\cap H_j\}_{j\in J}$ in the $(n+1-|I|)$--dimensional projective space $\cap_{i\in I}H_i\cong\mathbb{P}^{n+1-|I|}(\mathbb{C})$.
\end{namedthm*}

\begin{proof}
Suppose on the contrary that there exists an entire curve $f:\mathbb{C}\rightarrow\cap_{i\in I}H_i\setminus\big(\cup_{j\in J}H_j\setminus A_{m,n+1-|I|}\big)$. Since each star-subspace in $A_{m,n+1-|I|}$ is constructed from at most $n+1-|I|$ hyperplanes in the family $\{\cap_{i\in I}H_i\cap H_j\}_{j\in J}$, the curve $f$ must avoid completely at least $|J|-m(n+1-|I|)$ hyperplanes in the projective space $\cap_{i\in I}H_i\cong\mathbb{P}^{n+1-|I|}(\mathbb{C})$. By the elementary estimate
\begin{align*}
|J|-m(n+1-|I|)
&
=
q+1-2|I|-m(n-|I|)\\
&
\geq
2(n+1-|I|)+1+\bigg[\bigg(\dfrac{n+3}{2}\bigg)^2-2(n+1)-(|I|-1)(n-|I|)\bigg]\\
&
\geq
2(n+1-|I|)+1,
\end{align*}
and by using Theorem~\ref{fujimoto-green}, we derive a contradiction.
\end{proof}

\textit{Deformation lemma.} For $2\leq l\leq n$, let $\Delta_l$ be a finite collection of subspaces of dimension $n+1-l$ coming from the family $\{H_i\}_{1\leq i\leq q}$, possibly with $\Delta_l=\emptyset$, and let $D_l\not \in \Delta_l$ be another subspace of dimension $n+1-l$, defined as $D_l=\cap_{i\in I_{D_l}}H_i$. For an arbitrary hypersurface $S=\{s=0\}$ in general position with respect to the family $\{H_i\}_{1\leq i\leq q}$ and for $\epsilon\not=0$, we set
\[
S_{\epsilon}
=
\big\{\epsilon s+\Pi_{i\not\in I_{D_l}}h_i^{n_i}=0\big\},
\]
where $n_i\geq 1$ are chosen (freely) so that $\sum_{i\not\in I_{D_l}}n_i=q$. Then the hypersurface $S_{\epsilon}$ is also in general position with respect to $\{H_i\}_{1\leq i\leq q}$. We denote by $\overline{\Delta}_l$ the family of all subspaces of dimension $n+1-l$ ($2\leq l\leq n+1$), with the convention $\overline{\Delta}_{n+1}=\emptyset$. We shall apply inductively the following lemma.

\begin{lem}\label{lem_deformation}
Assume that all complements of the form
\begin{equation}\label{start_form}
\cap_{i\in I}H_i\setminus\big(\cup_{j\in J}H_j\setminus(((\Delta_l\cup\overline{\Delta}_{l+1})\cap S)\cup A_{m,n+1-|I|})\big)
\end{equation}
are hyperbolic where $I$ and $J$ are two disjoint subsets of the index set $\{1,\ldots,q\}$ such that $1\leq |I|\leq n-1$, and $|J|= q+m+1-2|I|$ with some $0\leq m\leq |I|-1$, and where $A_{m,n+1-|I|}$ is a set of at most $m$ star-subspaces coming from the family of hyperplanes $\{\cap_{i\in I}H_i\cap H_j\}_{j\in J}$ in $\cap_{i\in I}H_i\cong\mathbb{P}^{n+1-|I|}(\mathbb{C})$. Then all complements of the form
\begin{equation}\label{end_form}
\cap_{i\in I}H_i\setminus\big(\cup_{j\in J}H_j\setminus(((\Delta_l\cup D_l\cup\overline{\Delta}_{l+1})\cap S_{\epsilon})\cup A_{m,n+1-|I|})\big)
\end{equation}
are also hyperbolic for sufficiently small $\epsilon\not=0$.
\end{lem}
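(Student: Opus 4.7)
The plan is to argue by contradiction in the spirit of the Starting Lemma. Suppose that for some sequence $\epsilon_k\to 0$ there exist entire curves
\[
f_k\colon\mathbb{C}\longrightarrow\cap_{i\in I}H_i\setminus\big(\cup_{j\in J}H_j\setminus E_{\epsilon_k}\big),
\]
where $E_\epsilon:=\big((\Delta_l\cup D_l\cup\overline{\Delta}_{l+1})\cap S_\epsilon\big)\cup A_{m,n+1-|I|}$. After reparametrization and extraction, the Sequences of Entire Curves Lemma produces a limit entire curve $f\colon\mathbb{C}\to\cap_{i\in I}H_i$, and the goal is to show that $f$ takes values in a complement of the form \eqref{start_form}, contradicting the inductive hypothesis.

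The pivotal algebraic observation is that for every $j\in J\setminus I_{D_l}$ the deformation $S_\epsilon$ agrees with $S$ along $H_j$: if $p\in H_j\cap S_\epsilon$ then $h_j(p)=0$, which makes the product $\prod_{i\notin I_{D_l}}h_i(p)^{n_i}$ vanish; the defining equation of $S_\epsilon$ then forces $\epsilon s(p)=0$ and hence $p\in S$. Consequently $H_j\cap E_\epsilon$ coincides, for every such $j$, with the $\epsilon$-independent set $\big((\Delta_l\cup D_l\cup\overline{\Delta}_{l+1})\cap S\cap H_j\big)\cup(A_{m,n+1-|I|}\cap H_j)$.

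Next I would split according to where $f$ lands. In the generic situation where $f(\mathbb{C})$ is not contained in $D_l$, in any member of $\Delta_l\cup\overline{\Delta}_{l+1}$, or in any $H_j$ with $j\in J$, the Stability of Intersections applied to each such hypersurface pushes $f(\mathbb{C})\cap H_j$ (and, by secondary applications, $f(\mathbb{C})\cap D_l$ and $f(\mathbb{C})\cap W$) into the $\epsilon$-independent exceptional set identified above. Thus $f$ realises an entire curve in a complement of the form \eqref{start_form} corresponding to the current $\Delta_l$ enlarged by $D_l$, giving the desired contradiction.

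The main obstacle lies in the degenerate situations where $f(\mathbb{C})$ is contained in one of the subspaces $D_l$, $W\in\Delta_l\cup\overline{\Delta}_{l+1}$, or $H_j$ with $j\in J$. In each case $f$ factors through a strictly smaller projective subspace $\cap_{i\in I'}H_i$ with $|I'|>|I|$, and one must reconvert the residual avoidance data into a complement of the form \eqref{start_form} or \eqref{starting_form} attached to that smaller subspace. Verifying the numerical constraints $1\leq|I'|\leq n-1$ and $0\leq m'\leq|I'|-1$, tracking how star-subspaces from $A_{m,n+1-|I|}$ restrict to the smaller ambient projective space, and bounding the number of new star-subspaces produced by intersecting $D_l$ (or $W$) with the remaining hyperplanes so that a previously-established instance of the present lemma or the Starting Lemma applies --- this combinatorial bookkeeping constitutes the delicate technical heart of the argument.
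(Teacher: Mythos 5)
Your setup matches the paper's: argue by contradiction, extract a Brody limit $f$ of the curves $f_{\epsilon_k}$, and exploit the fact that $S_\epsilon\cap H_j=S\cap H_j$ whenever $j\notin I_{D_l}$ (because $h_j$ divides $\prod_{i\notin I_{D_l}}h_i^{n_i}$). But two genuine gaps remain. First, even your ``generic'' case does not close as stated: the $\epsilon$-independent set into which you push $f(\mathbb{C})\cap H_j$ still contains $D_l\cap S$, so the complement you land in is not of the form \eqref{start_form} --- the hypothesis is assumed only for the given $\Delta_l$, not for $\Delta_l$ enlarged by $D_l$. You must add that $H_j\cap D_l$ lies in $\overline{\Delta}_{l+1}$ when $j\notin I_{D_l}$, and for the indices $j\in J\cap I_{D_l}$ --- which your algebraic observation does not cover, since there $h_j$ is not a factor of the product --- you need the separate argument that $\lim(D_l\cap S_{\epsilon_k})\subset D_l\cap(\cup_{i\notin I_{D_l}}H_i)$ and that the resulting pieces $f(\mathbb{C})\cap H_j\cap D_l\cap H_i$ ($i\notin I_{D_l}$) are absorbed into $\overline{\Delta}_{l+1}\cap S$.

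Second, and more seriously, you explicitly defer the degenerate case where $f(\mathbb{C})$ lies in a strictly smaller subspace $\cap_{k\in K}H_k$ with $I\subsetneq K$. That case is the bulk of the paper's proof: one restricts $A_{m,n+1-|I|}$ to $\cap_{k\in K}H_k$, splits it into the codimension-one part $B_{m,K}$ (absorbed into the hyperplanes indexed by a set $R\subset J\setminus K$) and the remainder $C_{m,K}$, distinguishes whether $Y=\cap_{k\in K}H_k\cap D_l$ has codimension one or more in $\cap_{k\in K}H_k$ (in the latter case $Y^*$ becomes a new star-subspace), and then verifies the two numerical constraints $|C_{m,K}|\le m'\le|K|-1$ and $|K|\le n-1$, the case $|K|=n$ being excluded via general position of $S$ and Picard's theorem. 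None of this is routine: the inequality $m'\ge|C_{m,K}|$ uses $|J|=q+m+1-2|I|$, $J\cap I=\emptyset$ and $I\subsetneq K$ in an essential way, and the whole reason the hypothesis is quantified over $m$ and $A_{m,n+1-|I|}$ is to make this reduction land back in an instance of \eqref{start_form}. As written, your proposal names this difficulty but does not resolve it, so the proof is incomplete. (You also omit the easy but necessary remark that for $|I|\ge l$ the complements \eqref{start_form} and \eqref{end_form} simply coincide.)
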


\begin{proof}
By the definition of $S_{\epsilon}$, we see that $S_{\epsilon}\cap\big(\cap_{m\in M}H_m\big)= S\cap\big(\cap_{m\in M}H_m\big)$ when $M\cap(\mathbf{Q}\setminus I_{D_l})\not=\emptyset$, hence
\[
(\Delta_l\cup D_l\cup\overline{\Delta}_{l+1})\cap S_{\epsilon}=
((\Delta_l\cup \overline{\Delta}_{l+1})\cap S)\cup(D_l\cap S_{\epsilon}).
\]
When $|I|\geq l$, using this, we observe that the two complements \eqref{start_form}, \eqref{end_form} coincide.

Assume therefore $|I|\leq l-1$. Suppose by contradiction that there exists a sequence of entire curves $f_{\epsilon_k}(\mathbb{C})$, $\epsilon_k\rightarrow 0$, contained in the complement \eqref{end_form} for $\epsilon=\epsilon_k$. By the Brody Lemma, we may assume that $(f_{\epsilon_k})$ converges to an entire curve $f(\mathbb{C})\subset \cap_{i\in I}H_i$. We are going to prove that the curve $f(\mathbb{C})$ lands in some complement of the form \eqref{start_form}.

Let $\cap_{k\in K}H_k$ be the smallest subspace containing $f(\mathbb{C})$, so that $I$ is a subset of $K$. Take an index $j$ in $J\setminus K$. By stability of intersections, we have
\begin{align}
\label{intersection}
f(\mathbb{C})\cap H_j& \subset \lim f_{\epsilon_k}(\mathbb{C})\cap H_j\notag\\
&\subset ((\Delta_l\cup \overline{\Delta}_{l+1})\cap S)\cup A_{m,n+1-|I|}\cup\lim(D_l\cap S_{\epsilon_k}).
\end{align}
If the index $j$ does not belong to $I_{D_l}$, then $H_j\cap D_l\cap S_{\epsilon_k}\subset \overline{\Delta}_{l+1}\cap S$. It follows from \eqref{intersection} that
\begin{equation}
\label{intersection1}
f(\mathbb{C})\cap H_j
\subset ((\Delta_l\cup \overline{\Delta}_{l+1})\cap S)\cup A_{m,n+1-|I|}.
\end{equation}
If the index $j$ belongs to $I_{D_l}$, noting that $\lim (D_l\cap S_{\epsilon_k})$ is contained in $D_l\cap (\cup_{i\not\in I_{D_l}} H_i)$, hence from \eqref{intersection}
\begin{equation}
\label{intersection2}
f(\mathbb{C})\cap H_j
\subset
((\Delta_l\cup \overline{\Delta}_{l+1})\cap S)\cup A_{m,n+1-|I|}\cup (D_l\cap(\cup_{i\not\in I_{D_l}}H_i)).
\end{equation}

Assume first that $K=I$. We claim that \eqref{intersection1} also holds when the index $j\in J\setminus I$ belongs to $I_{D_l}$. Indeed, for the supplementary part in \eqref{intersection2}, we have
\[
f(\mathbb{C})\cap H_j\cap
\big(
D_l\cup_{i\not\in I_{D_l}}
H_i
\big)
\subset
\cup_{i\not\in I_{D_l}}
(f(\mathbb{C})\cap H_j\cap H_i),
\]
so that \eqref{intersection1} applies here to all $i\not\in I_{D_l}$. Hence, the curve $f(\mathbb{C})$ lands inside
\[
\cap_{i\in I}H_i
\setminus
\big(
\cup_{j\in J}H_j\setminus(((\Delta_l\cup\overline{\Delta}_{l+1})\cap S)\cup A_{m,n+1-|I|})\big),
\]
contradicting the hypothesis.\\

Assume now that $I$ is a proper subset of $K$. Let us set
\[
A_{m,n+1-|I|,K}
=
\{X\cap(\cap_{k\in K}H_k)|X\in A_{m,n+1-|I|}\}.
\]
This set consists of star-subspaces of $\cap_{k\in K}H_k\cong\mathbb{P}^{n+1-|K|}(\mathbb{C})$. Let $B_{m,K}$ be the subset of $A_{m,n+1-|I|,K}$ containing all star-subspaces of dimension $n-|K|$ (i.e. of codimension $1$ in $\cap_{k\in K}H_k$), and let $C_{m,K}$ be the remaining part. A star-subspace in $B_{m,K}$ is of the form $(\cap_{k\in K}H_k\cap H_j)^*$ for some index $j\in J\setminus K$. Let then $R$ denote the set of such indices $j$, so that
\[
|R|=|B_{m,K}|.
\]
We consider two cases separately, depending on the dimension of the subspace $Y=\cap_{k\in K}H_k\cap D_l$.

\begin{itemize}
\item[\textbf{Case 1:}] $Y$ is a subspace of dimension $n-|K|$. In this case, $Y$ is of the form $(\cap_{k\in K}H_k)\cap H_y$ for some index $y$ in $I_{D_l}$. It follows from \eqref{intersection}, \eqref{intersection1}, \eqref{intersection2} that the curve $f(\mathbb{C})$ lands inside 
\[
\cap_{k\in K}H_k
\setminus\big(\cup_{j\in (J\setminus K)\setminus (R\cup\{y\})}H_j\setminus(((\Delta_l\cup\overline{\Delta}_{l+1})\cap S)\cup C_{m,K})\big).
\]
To conclude that this set is of the form \eqref{start_form}, we need to show that
\begin{itemize}
\item[\textbf{(1)}] $|(J\setminus K)\setminus (R\cup\{y\})|=q+m'+1-2|K|$ with $|C_{m,K}|\leq m'\leq |K|-1$;
\item[\textbf{(2)}] $|K|\leq n-1$.
\end{itemize}

Consider \textbf{(1)}. We need to verify the corresponding
required inequality between cardinalities
\[
|C_{m,K}|\leq |(J\setminus K)\setminus (R\cup\{y\})|-q+2|K|-1\leq |K|-1.
\]
The right inequality is equivalent to
\[
|(J\setminus K)\setminus (R\cup\{y\})|
\leq
|\{1,\ldots,q\}\setminus K|,
\]
which is trivial. The left inequality follows from the elementary estimates
\begin{align*}
|(J\setminus K)\setminus (R\cup\{y\})|-q+2|K|-1
&
\,
\geq
\,
|J\setminus K|-|B_{m,K}|-q+2|K|-2\\
&
\,
=
\,
|J|-|J\cap K|-|B_{m,K}|-q+2|K|-2\\
&
\,
=
\,
(m-|B_{m,K}|)+(2|K|-2|I|-|J\cap K|-1)
\\
&
\,
\geq
|C_{m,K}|,
\end{align*}
where the last inequality holds because $I$ and $J$ are two disjoint sets and $I$ is a proper subset of $K$.

Consider \textbf{(2)}. Suppose on the contrary that $|K|=n$. Since $S$ is in general position with respect to $\{H_i\}_{1\leq i\leq 2n+2}$, we see that
\[
\cap_{k\in K}H_k
\setminus\big(\cup_{j\in (J\setminus K)\setminus (R\cup\{y\})}H_j\setminus(((\Delta_l\cup\overline{\Delta}_{l+1})\cap S)\cup C_{m,K})\big)
=
\cap_{k\in K}H_k
\setminus\big(\cup_{j\in (J\setminus K)\setminus (R\cup\{y\})}H_j\setminus C_{m,K}\big).
\]
Since $|(J\setminus K)\setminus (R\cup\{y\})|\geq q+1-2n+|C_{m,K}|\geq 3+|C_{m,K}|$, the curve $f$ lands in a complement of at least $3$ points in a line. By Picard's Theorem, $f$ is constant, which is a contradiction.
\item[\textbf{Case 2:}] $Y$ is a subspace of dimension at most $n-|K|-1$. In this case, the curve $f(\mathbb{C})$ lands inside
\[
\cap_{k\in K}H_k
\setminus
\big(
\cup_{j\in (J\setminus K)\setminus R}H_j
\setminus
(((\Delta_l\cup\overline{\Delta}_{l+1})\cap S)\cup C_{m,K}\cup Y^*)
\big),
\]
which is also of the form \eqref{start_form}, since
\[
|(J\setminus K)\setminus R|
\,
\geq
\,
q-2|K|+1+|C_{m,K}\cup Y^*|,
\]
and since $|K|\leq n-1$, by similar arguments as in \textbf{Case 1}.
\end{itemize}

The Lemma is thus proved.
\end{proof}

\begin{proof}[Inductive deformation process and end of the proof of the Main Theorem]
We may begin by applying Lemma~\ref{lem_deformation} for $l=n$ (with $\overline{\Delta}_{n+1}=\emptyset$), firstly with $\Delta_n=\emptyset$, and with some $D_n\in\overline{\Delta}_n$, since $(\Delta_n\cup\overline{\Delta}_{n+1})\cap S=\emptyset$, hence the assumption of this lemma holds by the Starting Lemma. Next, we reapply Lemma~\ref{lem_deformation} inductively until we exhaust all $D_n\in\overline{\Delta}_{n}$. We get at the end a hypersurface $S_1$ such that all complements of the forms
\begin{align*}
&\cap_{i\in I}H_i
\setminus
\big(
\cup_{j\in J}H_j\setminus(S_1\cup A_{m,n+1-|I|})
\big)\quad\quad\quad\quad\quad\quad\, {\scriptstyle(|I|\,=\,n-1)}\\
&\cap_{i\in I}H_i
\setminus
\big(
\cup_{j\in J}H_j\setminus((\overline{\Delta}_{n}\cap S_1)\cup A_{m,n+1-|I|})
\big)\quad\quad\quad {\scriptstyle(|I|\,\leq\, n-2)}
\end{align*}
are hyperbolic, since when $|I|=n-1$, two components $
\cap_{i\in I}H_i
\setminus
\big(
\cup_{j\in J}H_j\setminus((\overline{\Delta}_{n}\cap S_1)\cup A_{m,n+1-|I|})
\big)$ and $
\cap_{i\in I}H_i
\setminus
\big(
\cup_{j\in J}H_j\setminus(S_1\cup A_{m,n+1-|I|})
\big)$ are equal.
Considering this as the starting point of the second step, we apply inductively Lemma \ref{lem_deformation} for $l=n-1$ and receive at the end a hypersurface $S_2$ such that all complements of the forms
\begin{align*}
&\cap_{i\in I}H_i
\setminus
\big(
\cup_{j\in J}H_j\setminus(S_2\cup A_{m,n+1-|I|})
\big)\quad\quad\quad\quad\quad\quad\, {\scriptstyle(n-2\,\leq\,|I|\,\leq\,n-1)}\\
&\cap_{i\in I}H_i
\setminus
\big(
\cup_{j\in J}H_j\setminus((\overline{\Delta}_{n-1}\cap S_2)\cup A_{m,n+1-|I|})
\big)\quad\quad {\scriptstyle(|I|\,\leq\, n-3)}
\end{align*}
are hyperbolic, for the same reason as in above. Continuing this process, we get at the end of the $(n-1)^{\text{th}}$ step a hypersurface $S=S_{n-1}$ such that all complements of the forms
\[
\cap_{i\in I}H_i
\setminus
\big(
\cup_{j\in J}H_j\setminus(S_{n-1}\cup A_{m,n+1-|I|})
\big)\quad\quad\quad\quad\, {\scriptstyle(1\,\leq\,|I|\,\leq\,n-1)}
\]
are hyperbolic. In particularly, by choosing $m=|I|-1$, whence $|J|=q-|I|$, and by choosing $A_{m,n+1-|I|}=\emptyset$, all complements of the form \eqref{reduced_problem} are hyperbolic for $S=S_{n-1}$.
\end{proof}
\centering
\printbibliography
\Addresses
\end{document}